\newtheorem{thm}{Theorem}[section]
\newtheorem{lemma}[thm]{Lemma}
\newtheorem{exm}[thm]{Example}
\newtheorem{prop}[thm]{Proposition}
\theoremstyle{definition}
\newtheorem{defin}[thm]{Definition}
\newtheorem{example}[thm]{Example}
\newtheorem{remark}[thm]{Remark}
\newtheorem{question}[thm]{Question}
\newtheorem{problem}[thm]{Problem}
\numberwithin{equation}{section}
\newcounter{first}
{\end{list}}
\newcommand{\A}{{\mathcal A}}
\newcommand{\Z}{{\mathbb Z}}
\newcommand{\C}{{\mathbb C}}
\renewcommand{\P}{{\mathbb P}}
\renewcommand{\k}{\Bbbk}
\newcommand{\kk}{\mathbb K}
\DeclareMathOperator{\codim}{codim}
\DeclareMathOperator{\spn}{span}
\DeclareMathOperator{\pdim}{pdim}
\DeclareMathOperator{\reg}{reg}
\begin{document}

\title[Nets in $\P^2$ and Alexander Duality]%
{Nets in $\P^2$ and Alexander Duality}

\author[Nancy Abdallah]{Nancy Abdallah}
\address{Department of Mathematics,
University of Boras,
Boras, Sweden}
\email{\href{mailto:nancy.abdallah@hb.se}{nancy.abdallah@hb.se}}
\urladdr{\href{https://www.hb.se/en/shortcuts/contact/employee/NAAB}%
{https://www.hb.se/en/shortcuts/contact/employee/NAAB}}

\author[Hal Schenck]{Hal Schenck}
\thanks{Schenck supported by NSF 2006410}
\address{Department of Mathematics,
Auburn University, Auburn, AL 36849}
\email{\href{mailto:hks0015@auburn.edu}{hks0015@auburn.edu}}
\urladdr{\href{http://webhome.auburn.edu/~hks0015/}%
{http://webhome.auburn.edu/~hks0015/}}

\subjclass[2000]{Primary
 05B35, 52C35.
}

\date{\today}

\begin{abstract}
A  net in $\P^2$ is a configuration of lines $\A$ and points $X$ satisfying certain incidence properties. 
Nets appear in a variety of settings, ranging from quasigroups to combinatorial design to classification of Kac-Moody algebras to cohomology jump loci of hyperplane arrangements.
For a matroid $M$ and rank $r$, we associate a monomial ideal (a monomial variant of the Orlik-Solomon ideal) to the set of flats of $M$ of rank $\le r$. 
In the context of line arrangements in $\P^2$, applying Alexander duality to the resulting ideal yields insight into the combinatorial structure of nets. 
\vskip -5in
\end{abstract}
\vskip -5in
\maketitle
\vskip -5in
\section{Introduction}\label{sec:intro}
The investigation of point-line incidence relations in $\P^2$ reaches back into the mists of time; for a comprehensive treatment see Gr\"unbaum \cite{Gr}.
\begin{defin}\label{NetDef}
For a configuration of lines $\A \subset \P_{\C}^2$, if $p$ is an intersection point of two or more lines, define $\mu(p) = |\mbox{lines through }p|-1$, and let $L_2(\A)$ be the set of all intersection points.
A $(k,d)$ net is a partition $\Pi$ of the lines of $\A$ into $k\ge 3$ blocks, each containing $d\ge 2$ lines, and a subset $X \subseteq \{p \in L_2(\A) \mid \mu(p) \ge 2 \}$ of multiple points such that 
\begin{enumerate}
\item every pair of lines from distinct blocks meet in some $ p \in X$.
\item there is exactly one line from each block of $\Pi$ passing through a point $p\in X$.
\end{enumerate} 
A {\em potential} $(k,d)$ net is a partition $\Pi$ and subset $X$ as above, but without the requirement that conditions $(1)$ and $(2)$ hold. 
\end{defin}
\noindent If $(\Pi, X)$ is a $(k,d)$ net, then it is easy to show that every line meets $X$ in $d$ points, and $|X|=d^2$. In \cite{Y09} Yuzvinsky proves that a $(k,d)$ net must have $k \in \{3,4\}$.
\begin{exm}\label{braidarr}
For $1\le i < j \le 4$ the equations $x_i-x_j =0$ define a set of hyperplanes in $\C^{4}$ which all contain the subspace $W = \spn(1,\cdots,1)$.
Projecting to $W^\perp$ yields a configuration of planes $A_3 \subseteq \C^{3}$ with common intersection at the origin, so $A_3$ also defines a line configuration in $\P^2$.
\vskip .15in
\begin{figure}[ht]
\subfigure{%
\label{fig:braid4}%
\begin{minipage}[t]{0.35\textwidth}
\setlength{\unitlength}{16pt}
\begin{picture}(6,5)(-1,-2.5)
\put(-0.8,0){\line(1,0){5.6}}
\put(-0.8,-0.4){\line(2,1){4.7}}
\put(-0.4,-0.8){\line(1,2){2.8}}
\put(2,-0.8){\line(0,1){5.6}}
\put(4.8,-0.4){\line(-2,1){4.7}}
\put(4.4,-0.8){\line(-1,2){2.8}}
\put(-1.7,0.3){\makebox(0,0){$L_{1}$}}
\put(-1.5,-0.6){\makebox(0,0){$L_{2}$}}
\put(-0.5,-1.3){\makebox(0,0){$L_{3}$}}
\put(2,-1.3){\makebox(0,0){$L_{4}$}}
\put(4.5,-1.3){\makebox(0,0){$L_{5}$}}
\put(5.5,-0.6){\makebox(0,0){$L_{6}$}}
\end{picture}
\end{minipage}
}
\setlength{\unitlength}{0.8cm}
\subfigure{%
\label{fig:partitionlat}%
\begin{minipage}[t]{0.45\textwidth}
\begin{picture}(5,5.7)(0,-4)
\xygraph{!{0;<10mm,0mm>:<0mm,14mm>::}
[]*D(3){{\bf 0} = \hat{1}}*-{\bullet}  
(
-@{..}[dlll]*D(3.4){123}*-{\bullet}  
(
-@{..}[d]*U(2.5){1}*-{\bullet}-@{..}[drrr]*U(2.5){\C^3 =\hat{0} }*-{\bullet} 
,-@{..}[dr]*U(2.5){2}*-{\bullet}-@{..}[drr]
,-@{..}[drr]*U(2.5){3}*-{\bullet}-@{..}[dr]
)
,-@{..}[dll]*D(2.1){25}*-{\bullet}  
(
-@{..}[d]
,-@{..}[drrrr]*U(2.5){5}*-{\bullet}-@{..}[dll]
)
,-@{..}[dl]*D(3.4){156}*-{\bullet} 
(
-@{..}[dll]
,-@{..}[drr]*U(2.5){6}*-{\bullet}-@{..}[dl]
,-@{..}[drrr]
)
,-@{..}[d]*D(2.1){36}*-{\bullet} 
(
-@{..}[dl]
,-@{..}[dr]
)
,-@{..}[dr]*D(3.4){246}*-{\bullet}  
(
-@{..}[dlll]
,-@{..}[d]
,-@{..}[drr]*U(2.5){4}*-{\bullet}-@{..}[dlll]
)
,-@{..}[drr]*D(2.1){14}*-{\bullet}  
(
-@{..}[dlllll]
,-@{..}[dr]
)
,-@{..}[drrr]*D(3.4){345}*-{\bullet}  
(
-@{..}[dllll]
,-@{..}[dl]
,-@{..}[d]
)
(
}
\end{picture}
\end{minipage}
}
\caption{\textsf{The line configuration $A_{3}\subseteq \P^2$ and
    $L(A_{3})\mbox{ for }A_{3} \subseteq \C^3$}}
\label{fig:braid}
\end{figure}
\vskip -.05in
\noindent  The matroid defined by the lattice of intersections $($in $\C^3)$ is depicted on the right. The partition $|14|25|36|$ and set of triple points $X = \{123,156,246,345\}$ define a net.

\end{exm}
\noindent It follows immediately from Definition~\ref{NetDef} that if $(\Pi,X)$ is a net, then every $\cap_i H_i =p  \in L_2(\A)$ is either an element of $X,$ or has all $H_i$ in the same block of $\Pi$. If $H_1 \cap H_2 = p\in L_2(\A)$ and $\mu(p)=1$, then $H_1$ and $H_2$ must be in the same block of $\Pi$. \newline

\noindent The set of flats of a matroid, partially ordered by inclusion, form a lattice, so it is natural to ask:
\begin{question}\label{monIquestion}
Is there a monomial ideal associated to a matroid that captures existence of a net?
\end{question}
\begin{defin}\label{JidealDef}
For a matroid on ground set $\{1,\ldots,n\}$ and choice of rank $r$ and field $\k$, let $S = \k[x_1, \ldots, x_n]$, and let $J$ denote the ideal generated by monomials corresponding to the flats of rank $\le r$. So a monomial $m = x_{i_1}\cdots x_{i_k} \in J \leftrightarrow [i_1,\ldots,i_k]$ is a flat of rank at most $r$. 
\end{defin} 
\begin{exm}\label{Jbraidarr}
For $r=2$, the ideal $J$ in Example~\ref{braidarr} is generated by 
\[
\langle x_1x_4, x_2x_5, x_3x_6,x_1x_2x_3, x_1x_5x_6, x_2x_4x_6, x_3x_4x_5 \rangle.
\]
\end{exm}
Definition~\ref{JidealDef} works for any matroid; our interest stems from the study of complex projective hyperplane arrangements. In that setting, a flat of rank $r$ corresponds to a (maximal) collection of hyperplanes meeting in codimension $r$. Work of Falk-Yuzvinsky in \cite{FY} shows that nets play a fundamental role in the study of the resonance variety of a hyperplane arrangement. 

The resonance variety is defined in terms of the Orlik-Solomon algebra, and has attracted considerable attention: see for example work of Aomoto \cite{Ao}, Esnault-Schechtman-Viehweg \cite{ESV}, Schechtman-Terao-Varchenko \cite{STV}, Yuzvinsky \cite{Yuz}, Falk \cite{Fa},  Cohen-Suciu \cite{CScv}, Libgober-Yuzvinsky \cite{LY}, and Falk-Yuzvinsky \cite{FY}. The Orlik-Solomon algebra is not needed to describe nets, but is used to define resonance varieties. For completeness we include in \S 5 an appendix on the Orlik-Solomon algebra and resonance varieties.

The generalization of Example~\ref{braidarr} will serve as a running example. The {\em braid arrangement} $A_n$ is defined by equations $x_i-x_j = 0$ for $1 \le i <j \le n+1$. It plays a central role in many areas of mathematics: in mathematical physics, the complement of $A_n$ is the configuration space for $n+1$ non-colliding points. In combinatorics, the lattice of intersections $L(A_n)$ is isomorphic to the partition lattice $\Pi_{n+1}$, and in representation theory, $A_n$ consists of fixed points of reflections in the Weyl group of $SL(n+1)$. 

\begin{remark}\label{EvsS}
By Proposition 2.1 of \cite{AAH}, for a squarefree monomial ideal, results over the symmetric algebra may be translated to results over the exterior algebra, and vice versa. In this paper, we work over the symmetric algebra.
\end{remark}
For hyperplane arrangements, a natural first guess at answering Question~\ref{monIquestion} is the initial ideal of the Orlik-Solomon ideal, which has
been used to good effect in a number of settings, e.g. Bj\"orner-Ziegler \cite{BZ}. It turns out that the initial ideal loses too much combinatorial
information to be useful in identifying nets and resonance. The ideal $J$ appearing in Definition~\ref{JidealDef} is our proposed answer to Question~\ref{monIquestion}. As our main interest is in nets, we focus on the rank two case, and in this setting call the ideal $J$ the {\em monomial OS ideal}.

A main tool in our investigation is Alexander duality. Alexander duality is a staple of both algebraic topology, commutative algebra, and combinatorics. A combinatorial proof of Alexander duality appears in \cite{BT}. In commutative algebra Alexander duality plays a key role in the study of squarefree monomial ideals. Applications of Alexander duality related to matroids and arrangements appear in works of Bj\"orner-Ziegler \cite{BZ92}, Falk \cite{f92}, and Eisenbud-Popescu-Yuzvinsky \cite{EPY}, but none address Question~\ref{monIquestion}.

In \S 2 we review some necessary concepts from homological and commutative algebra: free resolutions, Castelnuovo-Mumford regularity, betti numbers, and algebraic Alexander duality. In \S 3 we use Alexander duality to make a connection between nets and the ideal $J$, and in \S 4 we use Alexander duality to study the braid arrangement. As noted above, the appendix of \S 5 is a quick primer on the Orlik-Solomon algebra and resonance varieties. \newline

\pagebreak
\subsection{Approach and main results}  We state all results in terms of the symmetric algebra, which by Remark~\ref{EvsS} can can be translated to the exterior setting if desired.
\vskip .05in
\noindent {\bf Theorem A}:  
If $\A=(\Pi, X)$ is a potential $(k,d)$ net, with $\Pi = (\pi_1,\ldots,\pi_k)$, let $J_\Pi$ denote the ideal generated by the $k$ monomials of degree $d$ given by products of the variables within each block $\pi_i$,  and let $J_X$ denote the ideal generated by monomials corresponding to the points of $X$. Hence $J = J_X+J_Y$, where $J_{Y}$ is generated by monomials corresponding to elements of $L_2(\A)$ not in $X$. As noted earlier the degree two component of $J$ satisfies $J_2 \subseteq J_Y$. 
  \begin{enumerate}
    \item $\A=(\Pi, X)$ is a $(k,d)$ net iff 
    \[
    (J_X^\vee)_d = (J_{\Pi})_d, 
    \]
    where $I^\vee$ denotes the Alexander dual of $I$, defined in \S 2. 
    \vskip .05in
    \item If $\A = (\Pi,X)$ is a $(k,d)$ net, then all intersections of lines within a block $\pi_i$ of $\Pi$ are normal crossing iff $J_2$ is a direct sum of $k$ determinantal ideals $J_{\pi_1},\ldots,J_{\pi_k}$, with each $J_{\pi_i}$ generated by all squarefree quadratic monomials in a block of $d$-variables, and the blocks are disjoint.
                \end{enumerate}
   \vskip .03in  
 In the setting of $(2)$, a corollary is that each $J_{\pi_i}$ is the ideal of the $2 \times 2$ minors of a $2 \times d$ matrix, and the 
  quadratic quotient $S/J_2$ factors as 
  \vskip -.13in
  \[
   S/J_2 \simeq \bigotimes_{i=1}^k S_{\pi_i}/J_{\pi_i}, \mbox{ with } S_{\pi_i} \mbox{ a polynomial ring in the variables of }\pi_i.
   \]
This in turn means that the free resolution of $S/J_2$ as an $S$-module is the tensor product of the free resolutions of the $S/J_{\pi_i}$. Hence the free resolution of $S/J_2$ is a tensor product of $k$ Eagon-Northcott complexes, so $S/J_2$ is arithmetically Cohen-Macaulay (which means that $\codim(J_2)=\pdim(S/J_2)$), and the Alexander dual $S/J_2^\vee$ has a linear free resolution (which means that all differentials appearing in the free resolution of $S/J^\vee$ after the first step are matrices of linear forms).
             \vskip .03in
      \noindent Theorem B is an analysis of the monomial OS algebra for the type $A$ reflection arrangements generalizing Example~\ref{braidarr}. An arrangement of type $A_n$ is defined by the vanishing of the linear forms $x_i-x_j$ for $1 \le i < j \le n+1$, which is also the graphic arrangement (see \cite{ot}, \S2.4) corresponding to the complete graph $K_{n+1}$. The monomial OS ideal for $K_{n+1}$ is generated by cubic monomials corresponding to triangles in the graph, and quadratic monomials corresponding to pairs of disjoint edges.
      \vskip .03in
\noindent  {\bf Theorem B}: 
For the braid arrangement $A_{n-1}$, let $J(K_n)$ denote the ideal $J$. Then
the Hilbert series is $P(S/J(K_n),t)/(1-t)^{n-1}$, where the numerator is
\[
P(S/J(K_n),t)= n+(1-n)(1-t)^{n-1}-\binom{n}{2}t(1-t)^{n-2}.
\]
\noindent The ring $S/J(K_n)$ has Castelnuovo-Mumford regularity two. For $S/J(K_n)^\vee$, the Hilbert series is $P(S/J(K_n)^\vee,t)/(1-t)^{n-1}$, where the numerator is
\[
\!\!\!\!\!\!\!\!\!\!P(S/J(K_n)^\vee,t)= 1-nt^{{n-1 \choose 2}}+{n \choose 2}t^{{n \choose 2}-1}-{n-1 \choose 2}t^{{n \choose 2}},
\]
and $S/J(K_n)^\vee$ has projective dimension three, with $\Z^n$--graded betti numbers 
\begin{equation}
\label{eq:phikbound}
  \begin{array}{ccccc}
    b_{0,{\bf m}} (S/J(K_n)^\vee) &  = & 1 & \mbox{ if } |{\bf m}| = &0, \\
    b_{1,{\bf m}} (S/J(K_n)^\vee) &  = & 1 & \mbox{ if }\ { \bf m}
                                        \leftrightarrow &K_{n-1}
                                                          \subseteq K_n, \\
    b_{2,{\bf m}}(S/J(K_n)^\vee) &  = & 1 & \mbox{ if } |{\bf m}| = &{ n  \choose 2} -1,\\
    b_{3,{\bf m}} (S/J(K_n)^\vee) &  = & { n-1  \choose 2} & \mbox{ if } |{\bf m}|=&{ n  \choose 2}.
   \end{array}                                                 
 \end{equation}
\noindent Theorem \ref{HochsterUSE} describes the entire minimal free resolution for
$S/J(K_n)^\vee$. We discuss Alexander duality in \S 2, prove Theorem A in \S 3, and prove Theorem B in \S 4.
\pagebreak
\section{Alexander duality}\label{AlexDual}
\noindent We will make use of two fundamental results involving Alexander duality: Primary decomposition, and Hochster's theorem. An excellent reference for both is \cite{MillS}.
\subsection{Alexander duality and free resolutions}
\begin{defin}
Fix a field $\k$, and let $\Delta $ be a simplicial complex on vertex
set $V$. If $|V|=n$, let $S = \k[x_1,\ldots, x_n]$. The Stanley-Reisner ring of
$\Delta$ is $S/I_\Delta$,
where
\[
  I_\Delta = \langle x_{i_1}\cdots x_{i_j} \mid [i_1,\ldots, i_j]
  \mbox{ is not a face of }\Delta \rangle
\]
\end{defin}
The ideal $I_\Delta$ encodes all the non-faces of $\Delta$, so in
particular the simplicial complex $\Delta$ and the ideal $I_\Delta$
carry the same information.
\begin{exm}\label{SRexm}
Let $\Delta$ be a simplicial complex on four vertices, and edges $\{[12],[23],[34],[14],[13]\}$. The missing
faces are the edge $[24]$, and all triangles. The missing
triangles $[124]$ and $[234]$ and the missing full $3$-simplex $[1234]$ are consequences of the missing edge, so
\[
  I_\Delta = \langle x_2x_4, x_1x_2x_3, x_1x_3x_4 \rangle.
\]
\end{exm}
\noindent The complement of a face of $\Delta$ is a {\em coface}; let $CF(\Delta)$ denote the set of minimal cofaces of $\Delta$. 
\begin{thm}\label{primaryDecompSR}$[$5.3.3 of \cite{S}$]$ For a simplicial complex $\Delta$, the primary decomposition of $I_\Delta$ is
\[
I_\Delta = \bigcap\limits_{[v_{i_1} \cdots v_{i_k}] \in CF(\Delta)}\langle x_{i_1}, \ldots, x_{i_k} \rangle.
\]
\end{thm}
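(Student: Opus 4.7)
The plan is to show that both sides of the claimed equality are radical monomial ideals, hence determined by their squarefree content, and then to establish the equality on squarefree monomials by a direct set-theoretic argument. Write $V = \{1,\ldots,n\}$ and, for $\sigma \subseteq V$, set $x_\sigma = \prod_{i \in \sigma} x_i$.

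First, I would note that both ideals in question are monomial: $I_\Delta$ by definition, and the right-hand side because finite intersections of monomial ideals are monomial. Both are moreover radical, since $I_\Delta$ is generated by squarefree monomials and the right-hand side is an intersection of prime ideals. A radical monomial ideal is determined by the squarefree monomials it contains, so the problem reduces to proving, for every $\sigma \subseteq V$, that
\[
x_\sigma \in I_\Delta \iff x_\sigma \in \bigcap_{c \in CF(\Delta)} \langle x_i : i \in c \rangle.
\]

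Next, I would unwind both conditions. The left-hand side says $\sigma$ is not a face of $\Delta$. The right-hand side says $\sigma \cap c \neq \emptyset$ for every minimal coface $c$, since $x_\sigma$ lies in the prime $\langle x_i : i \in c\rangle$ precisely when $\sigma$ meets $c$. The key combinatorial observation is that the minimal cofaces of $\Delta$ are exactly the complements $V \setminus \tau$ as $\tau$ runs over the facets of $\Delta$: taking complements reverses inclusion, so smaller faces give larger cofaces, and the facets (maximal faces) yield the smallest cofaces. Granting this, $\sigma$ is a face iff $\sigma \subseteq \tau$ for some facet $\tau$, iff $\sigma \cap (V \setminus \tau) = \emptyset$ for some facet, iff $\sigma$ is disjoint from some minimal coface. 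Negating gives the desired equivalence: $\sigma$ is not a face iff $\sigma$ meets every minimal coface.

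The main obstacle is bookkeeping rather than mathematical depth -- namely, keeping track of the bijection between facets and minimal cofaces via complementation in $V$, and verifying that restricting to squarefree monomials really loses no information. In essence the whole theorem is De Morgan's law packaged through the Stanley-Reisner correspondence.
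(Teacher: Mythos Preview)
Your proof is correct and is the standard argument for this classical fact. Note, however, that the paper does not actually prove this theorem: it is stated with a citation to an external reference (Theorem 5.3.3 of \cite{S}) and used as a black box. So there is no ``paper's own proof'' to compare against; your argument supplies what the paper omits.
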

\begin{exm}\label{SRexm2}In Example~\ref{SRexm} the minimal cofaces of $\Delta$ are $\{[34],[14],[12],[23],[24]\}$, so the primary decomposition is
  \begin{equation}\label{PDSR}
    I_\Delta = \langle x_1,x_2\rangle \cap \langle x_1,x_4\rangle \cap 
    \langle x_2,x_3\rangle \cap \langle x_2,x_4\rangle \cap \langle 
    x_3,x_4\rangle. 
  \end{equation}
\end{exm} 

\noindent  The fact that the minimal generators of a primary component can be chosen as variables is special to squarefree monomial (=Stanley-Reisner) ideals. Notice that by choosing variables as minimal generators of a primary component, the product of the generators of a primary component of a Stanley-Reisner ideal is a monomial. The ideal generated by such monomials (one for each primary component) is called the {\em monomialization} of the primary decomposition of $I_\Delta$.
\begin{defin}
The combinatorial Alexander dual of $\Delta$ is
\[
\Delta^\vee = \{ \sigma \subset V \mid \overline{\sigma} \not\in \Delta\}.
\]
The condition that $\overline{\sigma} \not\in \Delta$ means that $\sigma$ is the complement of a non-face of $\Delta$. 
\end{defin}
\begin{thm}\label{AlexPrimary}$[$\cite{ER} or \cite{Hochster}$]$
Monomializing the primary decomposition of $I_\Delta$ yields $I_{\Delta^\vee}$. 
\end{thm}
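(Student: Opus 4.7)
The plan is to show that both $I_{\Delta^\vee}$ and the monomialization of the primary decomposition of $I_\Delta$ are generated by exactly the same collection of squarefree monomials, one for each facet of $\Delta$. The proof is essentially a careful unpacking of definitions; the only nontrivial conceptual point is that complementation on $2^V$ reverses inclusion, so it swaps ``maximal'' and ``minimal'' at each step.

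First I would describe the monomialization explicitly. By Theorem~\ref{primaryDecompSR},
\[
I_\Delta = \bigcap_{[v_{i_1}\cdots v_{i_k}] \in CF(\Delta)} \langle x_{i_1},\ldots,x_{i_k}\rangle.
\]
Since a coface is by definition the complement of a face of $\Delta$, and complementation reverses inclusion, the minimal cofaces are precisely the complements of the facets (maximal faces) of $\Delta$. Monomializing each primary component $\langle x_j : j \in \overline{F}\rangle$ to the single monomial $\prod_{j \in \overline{F}} x_j$, the monomialization of the primary decomposition is
\[
M(\Delta) = \Bigl\langle \,\prod_{j \in \overline{F}} x_j \;:\; F \text{ a facet of } \Delta \Bigr\rangle.
\]

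Next I would identify the minimal generators of $I_{\Delta^\vee}$, which correspond to the minimal non-faces of $\Delta^\vee$. By definition of $\Delta^\vee$, a subset $\tau \subseteq V$ is a non-face of $\Delta^\vee$ iff $\overline{\tau} \in \Delta$. Applying the same order-reversal once more, $\tau$ is inclusion-minimal among such non-faces iff $\overline{\tau}$ is inclusion-maximal in $\Delta$, i.e.\ a facet. Thus the minimal non-faces of $\Delta^\vee$ are exactly $\{\overline{F} : F \text{ a facet of }\Delta\}$, and the minimal squarefree generators of $I_{\Delta^\vee}$ are precisely the generators of $M(\Delta)$. Comparing the two expressions yields $I_{\Delta^\vee} = M(\Delta)$, as claimed.

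The main (modest) obstacle is really just keeping the two order-reversals straight: the bijection \emph{facet of $\Delta$ $\leftrightarrow$ minimal coface of $\Delta$} on the primary-decomposition side, and \emph{facet of $\Delta$ $\leftrightarrow$ minimal non-face of $\Delta^\vee$} on the Stanley-Reisner side. Once both sets of generators are re-indexed by the facets of $\Delta$, the equality is immediate, and I would verify the correspondence on Example~\ref{SRexm2} as a sanity check (facets $[12],[23],[34],[14],[13]$ of $\Delta$ produce both the monomials $x_3x_4, x_1x_4, x_1x_2, x_2x_3, x_2x_4$ from monomializing \eqref{PDSR} and the same set of minimal non-faces of $\Delta^\vee$).
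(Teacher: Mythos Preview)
Your argument is correct. The paper does not supply its own proof of this theorem: it is stated as a cited result from \cite{ER} and \cite{Hochster} and used as a black box thereafter, so there is no ``paper's proof'' to compare against. Your unpacking of the definitions is the standard elementary argument, and the two order-reversal observations you isolate (minimal cofaces $\leftrightarrow$ facets, and minimal non-faces of $\Delta^\vee$ $\leftrightarrow$ facets of $\Delta$) are exactly what is needed. One small point worth making explicit, since you describe both sides as sets of \emph{minimal} generators: the monomials $\prod_{j\in\overline{F}} x_j$ indexed by facets $F$ are pairwise incomparable under divisibility, because distinct facets are incomparable under inclusion and hence so are their complements. This confirms that the monomialization already gives a minimal generating set, matching the minimal non-faces of $\Delta^\vee$ on the nose. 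The sanity check against Example~\ref{SRexm2} is carried out correctly.
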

\begin{exm}\label{SRexm3}
The nonfaces of $\Delta$ of Example~\ref{SRexm} are
\[
\{[123], [124],[134],[234], [24]\},
\]
so the complements of the nonfaces are 
\[
\{ [4], [3],[2],[1],[13]\}. 
\]
Hence the maximal faces of $\Delta^\vee$ are the vertices $[2],[4]$ and
the edge $[13]$. In particular, all edges are missing from $\Delta^\vee$
except $[13]$. Monomializing the primary decomposition in Equation~\ref{PDSR} yields 
 \[
    I_{\Delta^\vee} = \langle x_1x_2, x_1x_4, x_2x_3, x_2x_4,  x_3x_4\rangle, 
  \]
which is indeed the Stanley-Reisner ideal of $\Delta^\vee$. 
\end{exm}
\subsection{Free resolutions and betti tables}
The Hilbert Syzygy Theorem \cite{e} guarantees that any finitely generated $\Z$-graded $S=\k[x_1,\ldots,x_n]$-module $M$ has a {\em minimal graded finite free resolution}: an exact sequence 
of free modules $F_i \simeq \oplus_{j} S(-j)^{b_{i,j}}$ with $b_{i,j} \in \Z:$ \begin{equation}\label{FFR}
0 \longrightarrow F_i \stackrel{d_i}{\longrightarrow} F_{i-1} \stackrel{d_i}{\longrightarrow} \cdots \longrightarrow F_0 \longrightarrow M \longrightarrow 0, 
\end{equation}
where $i \le n$ and the entries of the $d_i$ matrices are homogeneous of positive degree.
\begin{defin}\label{bettiTable} 
For $M$ as above, the {\em regularity} and {\em projective dimension} are
 \[
  \reg(M) = \sup\{j \mid b_{i,i+j} \ne 0\} \mbox{ and } \pdim(M) = \sup\{i \mid b_{i,\bullet} \ne 0\}.
\]
The {\em graded betti numbers} are
 \[
b_{i,j} = \dim_{\k}Tor_i(M,\k)_j.
\]
\end{defin}
\noindent This data is compactly encoded in the {\em betti table} \cite{e3}: 
an array whose entry in position $(i,j)$ (reading over and down) is $b_{i,i+j}$. This indexing seems odd, but it is set up so that 
$\reg(M)$ is given by the index of the bottom row of the betti table. 
\begin{exm}\label{SR4}
The minimal free resolution for $I_\Delta$ from Example~\ref{SRexm} is given below.
\begin{small}
\[
0 \longleftarrow I_\Delta \xleftarrow{\left[ \!\begin{array}{ccc}
x_2x_4& x_1x_2x_3& x_1x_3x_4\end{array}\! \right]} S(-2) \oplus S(-3)^2
\xleftarrow{\left[ \!
\begin{array}{cc}
x_1x_3 & 0\\
-x_4 & x_4 \\
0 & -x_2
\end{array}\! \right]} S(-4)^2 \longleftarrow 0
\]
\end{small}
\!\!The corresponding betti table is 
\begin{center}
\begin{small}
	\begin{tabular}{c|cc}
		\diagbox{j}{i}&0&1\\
		\hline 
		2&1&--\\
		3&2&2
	\end{tabular}
\end{small}
\end{center}
\!\!\!\! The first column of the table reflects that $I_\Delta$ has one quadratic generator and two
cubic generators. The second column shows there are two syzygies on
the three generators. In the same fashion it is easy to write down the minimal free resolution for $S/I_{\Delta^\vee}$, which has betti table 

\begin{center}
	\begin{small}
		\begin{tabular}{c|cccc}
			\diagbox{j}{i}&0&1&2&3\\
			\hline 
			0&1&--&--&--\\
			1&--&5&6&2
		\end{tabular}
	\end{small}
\end{center}
%
\end{exm}
\noindent Corollary 5.59 of \cite{MillS} shows that $\reg(I_\Delta) = \pdim(S/I_{\Delta^\vee})$; for the example above we have that $\pdim(S/I_{\Delta^\vee})= 3 = \reg(I_\Delta)$. 
\subsection{Hochster's theorem}$[$\cite{Hochster} or \cite{MillS}$]$
As in \S2.2, $\Delta$ is a simplicial complex on
$n$-vertices and $S=\k[x_1,\ldots,x_n]$. However, we now endow $S$
with the $\Z^n$ grading, with $\deg(x_i)={\bf e}_i$.
\begin{thm}\label{HochsterT}
For a simplicial complex $\Delta$ on $n$ vertices and $S$ graded by
$\Z^n$, let ${\bf m} \in \Z^n$ be a multidegree, and $|{\bf m}|=\sum_i
m_i$. Then
  \[
  b_{i, {\bf m}}(I_\Delta) = \dim(\widetilde{H}_{|{\bf 
      m}|-i-2}(\Delta_{\bf m}),\k)
\]
where $\Delta_{\bf m}$ is the subcomplex of $\Delta$ consisting of the
faces of $\Delta$ of weight ${\bf n}$ such that for all $i \in
\{1,\ldots, n\}$, $n_i \le m_i$ (hence ${\bf n} \le {\bf m}$ pointwise).
\end{thm}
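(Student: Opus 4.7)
The plan is to compute $b_{i,{\bf m}}(I_\Delta)=\dim_\k \Tor_i^S(\k,I_\Delta)_{\bf m}$ from a minimal free resolution of $\k$ and then identify each multigraded Tor piece with the reduced simplicial homology of $\Delta_{\bf m}$. First I would pass from $I_\Delta$ to $S/I_\Delta$ via the short exact sequence $0 \to I_\Delta \to S \to S/I_\Delta \to 0$: since $\Tor_j^S(\k,S)=0$ for $j\ge 1$, the long exact Tor sequence gives $\Tor_i^S(\k,I_\Delta) \simeq \Tor_{i+1}^S(\k,S/I_\Delta)$ for $i \ge 1$. This absorbs one unit of index shift and reduces the problem to showing $b_{i+1,{\bf m}}(S/I_\Delta)=\dim \widetilde{H}_{|{\bf m}|-i-2}(\Delta_{\bf m},\k)$. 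The edge case $i=0$ reflects that $I_\Delta$ is minimally generated by monomials of minimal nonfaces $G$, and for such ${\bf m}$ one has $\Delta_{\bf m}=\partial G$ with $\widetilde{H}_{|G|-2}(\partial G)=\k$, consistent with the formula.

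Next I would compute $\Tor_{i+1}^S(\k,S/I_\Delta)_{\bf m}$ by resolving $\k$ by the Koszul complex $K_\bullet$ on $x_1,\ldots,x_n$, tensoring with $S/I_\Delta$, and extracting the multidegree-${\bf m}$ strand. Its $j$-th term is
\[
\bigoplus_{|F|=j,\; {\bf m}-{\bf e}_F\ge 0}\ (S/I_\Delta)_{{\bf m}-{\bf e}_F},
\]
with each summand equal to $\k$ or $0$ according to whether $\supp({\bf m}-{\bf e}_F)$ is a face of $\Delta$. Since $I_\Delta$ is a squarefree monomial ideal, its minimal resolution is supported in squarefree multidegrees, so it suffices to work with squarefree ${\bf m}$ of support $W$. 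The surviving Koszul indices are then the $F \subseteq W$ with $W\setminus F \in \Delta_{\bf m}=\Delta|_W$, and the complementation bijection $F \longleftrightarrow W\setminus F$ matches the $j$-th Koszul term with the $\k$-span of the $(|W|-j-1)$-dimensional faces of $\Delta_{\bf m}$, where $|W|=|{\bf m}|$.

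The core step — and the main technical obstacle — is to verify that the Koszul differential $e_F \mapsto \sum_{k \in F} \pm\, x_k\, e_{F\setminus k}$ becomes, under $F \leftrightarrow W \setminus F$, the augmented reduced simplicial boundary $\partial[v_0,\ldots,v_d] = \sum_k (-1)^k [v_0,\ldots,\widehat{v_k},\ldots,v_d]$ on $\Delta_{\bf m}$. This requires fixing a total order on $[n]$ and carefully tracking the orientation signs introduced by the complement operation inside the exterior algebra so that the two complexes are genuinely isomorphic, not just quasi-isomorphic. Once this identification is in place, taking homology yields
\[
\Tor_{i+1}^S(\k,S/I_\Delta)_{\bf m} \;\simeq\; \widetilde{H}_{|{\bf m}|-(i+1)-1}(\Delta_{\bf m};\k) \;=\; \widetilde{H}_{|{\bf m}|-i-2}(\Delta_{\bf m};\k),
\]
which combined with the index shift from the first step gives the asserted formula.
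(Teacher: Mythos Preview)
The paper does not actually prove this theorem; it is stated with references to Hochster's original paper and to Miller--Sturmfels, and is used as a black box. So there is no ``paper's own proof'' to compare against.

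That said, your sketch is the standard Koszul-complex proof one finds in those references, and it is essentially correct. Two remarks. First, the complementation $F \leftrightarrow W\setminus F$ turns the Koszul differential (which lowers $|F|$) into a map that \emph{raises} $|W\setminus F|$, so what you actually obtain is the reduced simplicial \emph{cochain} complex of $\Delta_{\bf m}$, not the chain complex. Over a field this is harmless, since $\widetilde{H}^p(\Delta_{\bf m};\k)\simeq \widetilde{H}_p(\Delta_{\bf m};\k)$, and the dimension count is unaffected; but your description of the target as ``the augmented reduced simplicial boundary'' is not quite right, and if you were to write this out carefully you would see the arrows going the other way. Second, you flag the sign-matching as the ``main technical obstacle'' but do not carry it out; in a full proof you would need to exhibit the explicit sign twist (typically $(-1)^{\text{position of }k\text{ in }W}$ versus position in $F$) that makes the two differentials agree on the nose, or else argue directly that the resulting complex computes reduced cohomology regardless of the specific signs because any two simplicial orientation conventions give isomorphic chain complexes.
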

\begin{exm}\label{SR5} For Example~\ref{SRexm}, we compute the homology
  groups. The generators of $I_\Delta$ occur in
  multidegrees $\{(0101),(1110),(1011)\}$. We now compute
  \[
\dim(\widetilde{H}_{|{\bf m}|-3}(\Delta_{\bf m}),\k)
  \]
  for ${\bf m} = (1111)$.  For this multidegree, $\Delta_{{\bf m}}$ is
  clearly the entire complex $\Delta$, so
  consists of two (hollow) triangles, sharing the common edge $[13]$,
  hence
  \[
    \dim(\widetilde{H}_{1}(\Delta_{(1111)},\k))=2,
    \]
    yielding two first syzygies on $I_\Delta$. The syzygies themselves are   
    \[
      [x_1x_3,-x_4,0], [x_1x_3,0,-x_2]
    \]
  \end{exm}
  \section{Proof of Theorem A}
Alexander duality led us to Theorem A: it was computations with the quadratic component $J_2$ and the Alexander dual $J_2^\vee$ which indicated that $J_2^\vee$ had a linear resolution. Duality also is central in understanding nets.
\begin{proof}
For the proof of $(1)$, the key is Theorem~\ref{AlexPrimary}: the Alexander dual $I^\vee$ of a squarefree monomial ideal $I$  is obtained by monomializing the primary decomposition of $I$, as in Example~\ref{SRexm3}, combined with the description of the primary decomposition in Theorem~\ref{primaryDecompSR}.
\begin{itemize}
\item We first show
\[
(\Pi,X)  \mbox{ is a }(k,d) \mbox{ net }\implies (J_{\Pi})_d = (J_X^\vee)_d 
\]
\noindent A component of the primary decomposition of the monomial ideal $J_\Pi$ will contain exactly one variable from each block $\pi_i$ of $\Pi$.
Since $(\Pi,X)$ is a net, this means that $J_X \subseteq J_{\Pi}^\vee$, so dualizing yields
\[
J_{\Pi} \subseteq J_X^\vee.
\]
Therefore $(J_\Pi)_d \subseteq (J_X^\vee)_d$; note that it is not true that $J_\Pi = J_X^\vee$. From the definition of $J_\Pi$, $\dim_{\k}(J_{\Pi})_d = k$, so to prove equality it suffices to show that $\dim_{\k}(J_X^\vee)_d = k$. 

Because $J_X$ is a squarefree monomial ideal, it is the Stanley-Reisner ideal of a simplicial complex $\Delta$.
By Theorems~\ref{primaryDecompSR} and \ref{AlexPrimary}, to find the minimal degree generators of $J_X^\vee$, we need to find the biggest faces of $\Delta$.
 As the monomials of $J_X$ correspond to the nonfaces of $\Delta$, the biggest faces of $\Delta$ correspond to monomials which are not divisible by any monomial in $J_X$. 
 As soon as a monomial is divisible by at least one variable from each of the $k$ blocks of $\Pi$, the net condition means it might be in $J_X$. However, the primary decomposition of $J_\Pi$ will have $d^k$ components, whereas $J_X$ has $d^2$.
 
We now argue that the maximal faces of $\Delta$ are exactly the complements of single blocks $\pi_i$; to illustrate, in Example~\ref{braidarr}, the maximal faces of $\Delta$ are 
$\{[1245]  =  \overline{ [36] }, [1346]  =  \overline{ [25] }, [2356]  =  \overline{ [14] } \}$. To see this, notice that a set of lines $\sigma$ corresponds to a non-face of $\Delta$ exactly when 
\[
\sigma \cap \pi_i \ne \emptyset \mbox{ for all blocks } \pi_i.
\]
\noindent The maximal sets which fail to have this property are the complements of a single block of $\Pi$, and the result follows.
\vskip .05in
\item For the other direction
\[(J_{\Pi})_d = (J_X^\vee)_d \implies (\Pi,X) \mbox{  a }(k,d) \mbox{ net}
\]
\noindent For $(\Pi,X)$ a potential $(k,d)$ net, since $(J_{\Pi})_d = (J_X^\vee)_d$ and $J_\Pi$ is generated in degree $d$, we have 
\[
J_{\Pi} \subseteq J_X^\vee,
\]
hence $J_X \subseteq J_\Pi^\vee$. As noted above, $J_\Pi^\vee$ is generated by monomials obtained by taking exactly one element from every block, so this means every generator of $J_X$ satisfies this property, hence so also does $X$.
\end{itemize}
\noindent For the proof of $(2)$, the assumption on the net $(\Pi, X)$ means that if (after a change of variables) $\pi = |1,\ldots, d|$ is a block of the
 partition $\Pi$, then for $1\le i < j \le d$, $H_{i}$ and  $H_{j}$ meet in a normal crossing point $p$. Thus, for each block $\pi$ as above, we have a subideal of $J$
 \[
   J_\pi = \langle x_{i}x_{j} \mid 1 \le i < j \le d \rangle.
 \]
 For generic $\{a_1,\ldots,a_d\}$, $J_\pi$ can be written as the ideal generated by the $2 \times 2$ minors of the $2 \times
 d$ matrix
 \begin {center}
 $ M=\left[ \!
\begin{array}{cccc}
  x_1& x_2  &\cdots &x_d   \\
a_1x_1& a_2x_2  &\cdots &a_dx_d  
                             \end{array}\! \right], $
\end{center}
which by Theorem A2.10 of \cite{e} has an Eagon-Northcott resolution
\[
  \cdots  \longrightarrow Sym^1(S^2)^* \otimes \Lambda^3(S^d) \longrightarrow \Lambda^2(S^d) \stackrel{\Lambda^2(M)}{\longrightarrow}
\Lambda^2(S^2) \longrightarrow S/I_2(M) \longrightarrow 0. 
\]
As the variables in the blocks of $\Pi$ are distinct, we see that
\[
  J_2 \simeq \bigoplus\limits_{\pi_i \in \Pi} J_{\pi_i},
\]
The hypothesis on the quadratic component $J_2$ means that there is a partition $\Pi$ of the $d\cdot k$ hyperplanes into $k$-blocks of size $d$, and that all hyperplanes within a block have normal crossing intersection. Therefore, any point of intersection with multiplicity greater than two cannot be contained in a block of $\Pi$, so must lie in $X$. The condition on the primary decomposition from $(1)$ ensures that every multiple point in $X$ meets exactly one line from each block of $\Pi$.
\end{proof}
\begin{remark}
We thank an anonymous referee for suggesting a simplification in the proof of $(1)$. 
\end{remark}

\begin{exm}\label{CevaArr}
An infinite family of $(3,d)$ nets is the Ceva family, given by the arrangement defined by the vanishing of the polynomial $(x^d-y^d)(x^d-z^d)(y^d-z^d)$. Note that the vanishing set of $x^d-y^d$ defines $d$ lines passing thru the point $(0:0:1)$, and similarly for the vanishing sets of $x^d-z^d$ and $y^d-z^d$. This yields a $(3,d)$ net
\[
| 1,\ldots, d | d+1,\ldots 2d | 2d+1,\ldots,3d|
\]
Therefore $J$ is generated by the three polynomials of degree $d$ $($which define $J_\Pi)$
\[
\{x_1 \cdots x_d, x_{d+1} \cdots x_{2d}, x_{2d+1} \cdots x_{3d}\},
\]
and $d^2$ cubics corresponding to the triple intersections. As noted in \cite{harbourne}, for complex line arrangements in $\P^2$, this is the only infinite family known to have no normal crossing points. 
\noindent For $d=3$ the matroid is depicted below, where points denote the lines of the configuration in $\P^2$. The set $X$ consists of $\{ \{147\},\{258\},\{369\},\{168\},\{249\},\{267\},\{348\},\{159\},\{357\}\}$. 
\begin{figure}[h]
\vskip -1.4in
\includegraphics[height=4in, width=2.5in]{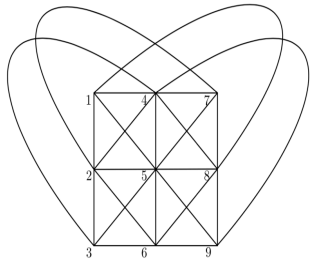}
\vskip -1.5in
\caption{The Ceva net for $d=3$}
\end{figure}
\noindent In \cite{Bar}, Bartz gives a classification of complete 3-nets. As noted in \cite{harbourne}, there are also two isolated examples of configurations in $\P^2_{\C}$ with no normal crossings: Klein's configuration has 21 lines, and Wiman's configuration has 45 lines. It would be interesting to check if these configurations support nets.
\end{exm}
\subsection{An aside on the singular fibers of a net or pencil}
In classical algebraic geometry, the term {\em net} refers to a three dimensional subspace of the space of sections of some line bundle on an algebraic variety $Z$, while a {\em pencil} is a two-dimensional subspace. 
Therefore a net gives a rational map $\phi$ from $Z$ to $\P^2$, and a pencil gives a rational map $\phi$  from $Z$ to $\P^1$. 
\begin{defin}$[$3.3, \cite{FY}$]$ If $F_1, F_2 \in \C[x,y,z]_d$ have no common factor, then the pencil $a_1F_1+a_2F_2$ with $[a_1:a_2] \in \P^1$ is {\em Ceva type} if there are three or more fibers that factor as products of linear forms, and after blowing up the base locus, the proper transforms of every fiber of $\phi$ are connected. 
\end{defin} 
\begin{prop}\label{singFiber}
If $\A=(\Pi, X)$ is a net satisfying part (2) of Theorem A (all intersections of lines within a block are normal crossing), then the net has singular fibers beyond the singular fibers coming from the blocks of $\Pi$,   unless $\A$ is a $(3,2)$ net or a $(4,3)$ net.
\end{prop}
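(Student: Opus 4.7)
\emph{Plan.} The approach is to compute the topological Euler characteristic of the total space of the pencil attached to the net in two different ways. By Yuzvinsky's theorem $k \in \{3,4\}$, and for each block $\pi_i$ the polynomial $F_i := \prod_{H \in \pi_i} \ell_H$ has degree $d$ and vanishes on the base locus $X$. Since $(k,d)$ nets are Ceva-type (the $F_i$ span a pencil of degree-$d$ curves through $X$), choosing any two generators gives a rational map $\phi \colon \P^2 \dashrightarrow \P^1$ whose $k$ completely reducible fibers are exactly the block fibers $\{F_i = 0\}$. Blowing up the $|X|=d^2$ base points produces a smooth surface $\tilde{Z}$ with $\chi(\tilde{Z}) = \chi(\P^2) + d^2 = 3 + d^2$, and $\phi$ extends to a morphism $\phi \colon \tilde{Z} \to \P^1$ whose generic fiber is a smooth plane curve of degree $d$ with Euler characteristic $\chi_g = 2 - (d-1)(d-2)$.

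Under the normal-crossing hypothesis of part (2) of Theorem A, the fiber of $\phi$ over the point of $\P^1$ corresponding to $\pi_i$ is the union of the proper transforms of the $d$ lines of $\pi_i$, which meet transversely in $\binom{d}{2}$ distinct nodes. None of these nodes lies in $X$, since within-block intersections are excluded from $X$ by the net axioms, so the blow-up leaves them undisturbed. Inclusion--exclusion on $d$ copies of $\P^1$ glued at $\binom{d}{2}$ nodes gives $\chi_b = 2d - \binom{d}{2}$, and a short simplification yields the defect $\chi_b - \chi_g = \binom{d}{2}$.

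Applying the fibration Euler characteristic formula
\[
\chi(\tilde{Z}) \;=\; \chi(\P^1)\,\chi_g \;+\; \sum_{s \text{ singular}} \bigl(\chi(F_s) - \chi_g\bigr),
\]
and isolating the contribution of singular fibers that are not block fibers, I obtain
\[
\sum_{s \text{ non-block}} \bigl(\chi(F_s) - \chi_g\bigr) \;=\; (3+d^2) \,-\, 2\bigl(2-(d-1)(d-2)\bigr) \,-\, k\binom{d}{2} \;=\; \frac{(d-1)\bigl((6-k)d - 6\bigr)}{2}.
\]
Since every singular fiber of a flat family of degree-$d$ plane curves contributes strictly positively to this defect (via the Milnor numbers of its singular points, together with the lower-degree contribution for any non-reduced components), additional singular fibers exist iff the right-hand side is positive. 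This quantity vanishes exactly when $(6-k)d = 6$, and the only solutions with $k \in \{3, 4\}$ are $(k,d) = (3,2)$ and $(k,d) = (4,3)$, giving the claim.

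The main obstacle is justifying the topological setup: that blowing up precisely $X$ resolves the base locus of $\phi$ so that $\phi$ extends to a morphism on $\tilde{Z}$ with smooth generic fiber, and that each non-block singular fiber contributes strictly positively to the Euler defect. The first follows from the net axioms, which force the pencil $\lambda F_1 + \mu F_2$ to cut out exactly $X$ with transverse intersections at each base point; the second is standard from the vanishing cycle / Milnor fiber description of plane curve singularities. Both are routine but each should be stated carefully in context.
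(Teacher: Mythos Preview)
Your argument is correct and is essentially the same Euler-characteristic computation as the paper's proof: the paper cites Theorem~4.2 of Falk--Yuzvinsky, whose inequality $3+|X|\ge (2-k)(3d-d^2)+2kd-\sum_{p\in\overline{X}}\mu(p)$ (with equality iff the blocks are the only singular fibers) is precisely your fibration formula $\chi(\tilde Z)=2\chi_g+\sum_s(\chi(F_s)-\chi_g)$ rewritten in net notation, and both proofs then substitute $|X|=d^2$, $\sum\mu(p)=k\binom{d}{2}$, solve to get $k=6-6/d$, and invoke Yuzvinsky's bound $k\le 4$. The only difference is that the paper treats the inequality and the strict positivity of non-block defects as a black box from \cite{FY}, while you re-derive them from the blow-up and Milnor-number picture.
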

\begin{proof}
In Theorem 4.2 of \cite{FY}, Falk-Yuzvinsky prove a result on the numerics of the Euler characteristic of a multinet $(\Pi,X)$, which for a $(k,d)$ net takes the form
\begin{equation}\label{FYeqn}
3 + |X| \ge (2-k)(3d-d^2) + 2kd- \sum\limits_{p \in \overline{X}} \mu(p), 
\end{equation}
\noindent where $\overline{X}$ is the set of points of intersection of $\A$ of the lines {\em within} the blocks $\pi_i$. They prove that equality holds in Equation~\ref{FYeqn} iff the only singular fibers of the net are the blocks. Since $|X|=d^2$ for a net, if all intersections within the blocks have $\mu(p)=1$, then
\[\sum\limits_{p \in \overline{X}} \mu(p) = k{d \choose 2}.
\]
 Equality in Equation~\ref{FYeqn} is only possible if $k = 6 -\frac{6}{d}$, which implies $(k,d) \in \{(3,2),(4,3),(5,6)\}$, and by \cite{Y09} only $(3,2)$ and $(4,3)$ can arise for a net. 
\end{proof}

\noindent The next example illustrates both Theorem A and Proposition~\ref{singFiber}. 
  \begin{exm}
    The Pappus and non-Pappus arrangements appear as examples 9 and 10
    in Suciu's survey paper \cite{Su}. Both are arrangements of 9
    lines in $\P^2$; each has 9 double points and 9 triple points. There is a ninth line (not pictured) at infinity.
    
\begin{figure}[h] 
\vskip -.5in
\includegraphics[trim = 45mm 90mm 45mm 90mm, clip,width=55mm,height=45mm]{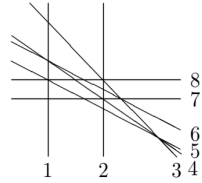}
\vskip -.5in
\caption{non-Pappus arrangement}
\end{figure}
\noindent The non-Pappus arrangement does not support a net. The betti table of $S/J_2$ is

\begin{center}
	\begin{small}
		\begin{tabular}{c|ccccccc}
			\diagbox{j}{i}&0&1&2&3&4&5&6\\
			\hline 
			0&1&--&--&--&--&--&--\\
			1&--&9&9&--&--&--&--\\
			2&--&--&18&18&--&--&--\\
			3&--&--&--&3&9&9&2
		\end{tabular}
	\end{small}
\end{center}
%
\noindent The Pappus configuration below supports a $(3,3)$ net, with blocks $|169|258|347|$
\begin{figure}[h]
\vskip -.3in
\includegraphics[trim = 45mm 90mm 45mm 90mm, clip,width=55mm,height=45mm]{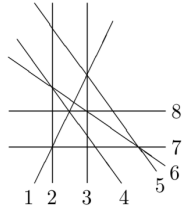}
\vskip -.5in
\caption{Pappus arrangement}
\end{figure}

\noindent For the Pappus arrangement, the cubic generators of the monomial OS ideal are
\[
\begin{array}{c}
\{x_1x_2x_7,x_1x_3x_5, x_1x_4x_8,x_2x_3x_9,x_2x_4x_6,x_3x_6x_8,x_4x_5x_9,x_5x_6x_7, x_7x_8x_9\}.
\end{array}
\]
\noindent The primary decomposition of $J_\Pi = \langle x_1x_6x_9,x_2x_5x_8,x_3x_4x_7\rangle$ has components:
\[
\begin{array}{c}
\left(x_{1}, x_{2}, x_{3}\right),\left(x_{1}, x_{2}, x_{4}\right),\left(x_{1}, x_{2}, x_{7}\right),\left(x_{1}, x_{3}, x_{5}\right),\left(x_{1}, x_{3}, x_{8}\right),\left(x_{1}, x_{4}, x_{5}\right),\\
\left(x_{1}, x_{4}, x_{8}\right),\left(x_{1}, x_{5}, x_{7}\right), \left(x_{1}, x_{7}, x_{8}\right),\left(x_{2}, x_{3}, x_{6}\right),\left(x_{2}, x_{3}, x_{9}\right),\left(x_{2}, x_{4}, x_{6}\right),\\
\left(x_{2}, x_{4}, x_{9}\right),\left(x_{2}, x_{6},  x_{7}\right),\left(x_{2}, x_{7}, x_{9}\right),\left(x_{3}, x_{5}, x_{6}\right),\left(x_{3 }, x_{5}, x_{9}\right),\left(x_{3}, x_{6}, x_{8}\right),\\
\left(x_{3}, x_{8}, x_{9}\right)  ,\left(x_{4}, x_{5}, x_{6}\right),\left(x_{4}, x_{5}, x_{9}\right),\left(x_{4}, x_{6}, x_{8}\right)   ,\left(x_{4}, x_{8}, x_{9}\right),\left(x_{5}, x_{6}, x_{7}\right),\\
\left(x_{5}, x_{7}, x_{9}\right),\left(x_{6}, x_{7}, x_{8}\right),\left(x_{7}, x_{8}, x_{9}\right)
\end{array}
\]
\pagebreak

\noindent So the cubic minimal generators of $J$ are elements of $J_\Pi^\vee$. The betti table of $S/J_2$ is given by

\begin{center}
	\begin{small}
		\begin{tabular}{c|ccccccc}
			\diagbox{j}{i}&0&1&2&3&4&5&6\\
			\hline 
			0&1&--&--&--&--&--&--\\
			1&--&9&6&--&--&--&--\\
			2&--&--&27&27&12&--&--\\
			3&--&--&--&27&54&36&8
		\end{tabular}
	\end{small}
\end{center}

\noindent and the betti table of the Alexander dual $S/J_2^\vee$ is 

\begin{center}
	\begin{small}
		\begin{tabular}{c|ccccc}
			\diagbox{j}{i}&0&1&2&3&4\\
			\hline 
			0&1&--&--&--&--\\
			1&--&--&--&--&--\\
			2&--&--&--&--&--\\
			3&--&--&--&--&--\\
			4&--&--&--&--&--\\
			5&--&27&54&36&8
		\end{tabular}
	\end{small}
\end{center}

 \noindent In contrast, for the Alexander dual $S/J_2^\vee$ of the non-Pappus arrangement the betti table is

\begin{center}
	\begin{small}
		\begin{tabular}{c|ccccc}
			\diagbox{j}{i}&0&1&2&3&4\\
			\hline 
			0&1&--&--&--&--\\
			1&--&--&--&--&--\\
			2&--&--&--&--&--\\
			3&--&--&--&--&--\\
			4&--&9&9&--&--\\
			5&--&3&9&9&2
		\end{tabular}
	\end{small}
\end{center}
\noindent As the ideal $J_2^\vee$ for the Pappus arrangement has a linear resolution, by the Eagon-Reiner theorem \cite{ER}, $S/J_2$ is Cohen-Macaulay, while the dual ideal $J_2^\vee$ of the non-Pappus arrangement does not have a linear resolution, so for the non-Pappus arrangement, $S/J_2$ is not Cohen-Macaulay.
\end{exm}

\begin{exm}
By Proposition~\ref{singFiber}, the Pappus arrangement has singular fibers in addition to the three normal crossing sets of lines of the blocks of $\Pi$. The unique $(3,2)$ net is that of Example~\ref{braidarr}, and the only known $(4,3)$ net is the Hessian; both satisfy the hypotheses of part (2) of Theorem A.  
\end{exm}
\section{The monomial Orlik-Solomon algebra for $A_{n-1}$}
 Let $J(K_n)$ denote the ideal $J$ for the braid arrangement $A_{n-1}$ (equivalently, the graphic
 arrangement $K_n$). Our focus in this section
 is on the algebraic behavior of the ideals $J(K_n)$ and $J(K_n)^\vee$. 
 To prove Theorem B, we first prove the projective dimension
of $S/J(K_n)^\vee$ is three, which follows from an analysis of the
corresponding ideal. With the bound on projective dimension of
$S/J(K_n)^\vee$ in hand, an analysis using Theorem~\ref{HochsterT}
yields the betti numbers for $S/J(K_n)^\vee$. 

Applying Corollary 5.59 of \cite{MillS} shows that $J(K_n)$
has regularity two.  As  there are only two rows in the betti table of
$J(K_n)$, to determine the betti numbers, it suffices to determine
the Hilbert series of $J(K_n)$ and one row of the betti table. 

The top row
of the betti table of $J(K_n)$ corresponds to a squarefree ideal generated by
quadrics, so is an edge ideal of a graph $\Gamma$. The linear strand of the resolution is
interesting in its own right, as it is depends on the cut polynomial of $\Gamma$.

\subsection{Hilbert series of $J(K_n)$} We begin by describing the
generators of the monomial ideals $J(K_n)$ and $J(K_n)^\vee$. For the complete
graph $K_{n}$, the rank one elements of $L(A_{n-1})$ are the ${n
  \choose 2}$ hyperplanes $V(x_i-x_j)$, so the rank two elements
correspond to
    \begin{enumerate}
\item Triangles in $K_n$.
\item Pairs of disjoint edges in $K_n$.
\end{enumerate}
We study the Stanley-Reisner ring in ${n \choose 2}$
variables $S=\k[x_{ij}\mid 1\le i < j \le n]$, modulo the ideal 
\[
  J(K_n) = \langle x_{ij}x_{kl}, x_{ij}x_{ik}x_{jk} \mid i,j,k,l \mbox{
    distinct }\in \{1,\ldots,n\} \rangle.
\]
\pagebreak
\begin{example}
\noindent For $K_4$, the minimal non faces of $\Delta_{J(K_4)}$ are 
\[
\{[12][34],[13][24],[14][23], [12][14][24],[23][24][34],[13][14][34],[12][13][23]\}
\]
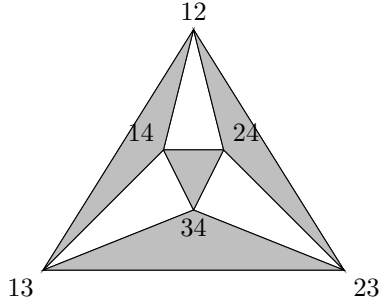
\begin{figure}[h]
\vskip -.2in

\begin{tikzpicture}[scale=0.8]
\draw (0,0) -- (5,0);
\draw (0,0) -- (2.5,4);
\draw (5,0) -- (2.5,4);
\draw (2,2) -- (3,2);
\draw (2,2) -- (2.5,1);
\draw (2.5,1) -- (3,2);
\draw (0,0) -- (2.5,1);
\draw (0,0) -- (2,2);
\draw (2,2) -- (2.5,4);
\draw (3,2) -- (2.5,4);
\draw (5,0) -- (3,2);
\draw (5,0) -- (2.5,1);
\draw[fill=gray!50] (0,0) -- (5,0) -- (2.5,1) -- cycle;
\draw[fill=gray!50] (3,2) -- (2,2) -- (2.5,1) -- cycle;
\draw[fill=gray!50] (0,0) -- (2,2) -- (2.5,4) -- cycle;
\draw[fill=gray!50] (3,2) -- (5,0) -- (2.5,4) -- cycle;
\draw (2.5,4) node [anchor=south]{12};
\draw (0,0) node [anchor=north east]{13};
\draw (5,0) node [anchor=north west]{23};
\draw (2.5,1) node [anchor=north]{34};
\draw (2,2) node [anchor=south east]{14};
\draw (3,2) node [anchor=south west]{24};
\end{tikzpicture}
\vskip -.2in
\caption{The simplicial complex $\Delta_{J(K_4)}$}
\label{deltaK4}
\end{figure}

\noindent The complex in Figure \ref{deltaK4} corresponds to the braid arrangement in Figure \ref{fig:braid}, where the vertices 12, 23, 13, 34, 14 and 24 in Figure 5 correspond, respectively, to the lines $L_1,\dots,L_6$.

\end{example}

\begin{lemma}\label{DeltaDes}
The simplicial complex $\Delta_{J(K_n)}$ corresponding to the ideal $J(K_n)$
consists of $n$ simplices of dimension $n-2$, each meeting the others
in $n -1$ points, and the face vector of $\Delta_{J(K_n)}$ is 
given by (with notation as in \cite{Z}, \S 8.3,) $f(\Delta_{J(K_n)},x) =$
\[
 x^{n-1} + n\cdot\Big((n-1)x^{n-2}+{n-1 \choose 2}x^{n-3}+{n-1 \choose 3}x^{n-4}+\ldots+(n-1)x+1\Big) - {n \choose 2}x^{n-2}
\]
\end{lemma}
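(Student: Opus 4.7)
The plan is to describe the faces of $\Delta_{J(K_n)}$ explicitly, identify the facets as the edge-stars of $K_n$, and then read off the face vector with a small inclusion-exclusion correction in dimension zero.

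First I would characterize the faces. The variables $x_{ij}$ identify the vertices of $\Delta_{J(K_n)}$ with the edges of $K_n$, and a set $F$ of edges is a face precisely when no two edges of $F$ are disjoint (blocked by the quadratic generators $x_{ij}x_{kl}$) and $F$ contains no triangle (blocked by the cubic generators $x_{ij}x_{ik}x_{jk}$). The key observation is a star-or-triangle dichotomy: if $e_1, e_2, e_3$ are three edges of $K_n$ that pairwise share endpoints, then either they have a common vertex or they form a triangle. Indeed, writing $e_1 = \{a,b\}$ and $e_2 = \{a, c\}$ with $c \ne b$, either $e_3$ contains $a$, or else $e_3$ must meet $e_1$ at $b$ and $e_2$ at $c$, forcing $e_3 = \{b, c\}$ and the triangle on $\{a,b,c\}$. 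Applied repeatedly: for any face $F$ with $|F| \ge 2$, any two members meet in a unique vertex $v$, and the dichotomy forces every further edge of $F$ to contain $v$; hence $F$ lies in the edge-star $S_v = \{\{v, w\} : w \ne v\}$. Conversely, every subset of $S_v$ avoids both disjoint pairs and triangles, so is a face.

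Second I would describe the simplicial structure. Each $S_v$ is an $(n-2)$-simplex, yielding $n$ facets. For distinct $v, w$, the intersection $S_v \cap S_w$ is the single vertex $\{v, w\}$ of $\Delta_{J(K_n)}$, and each vertex of $S_v$ has the form $\{v, w\}$ and lies in $S_w$. Thus $S_v$ meets the union of the remaining $n-1$ facets in exactly its $n-1$ vertices, matching the qualitative description in the lemma.

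Third I would tally the face vector in the convention $f(\Delta_{J(K_n)}, x) = \sum_{k=-1}^{n-2} f_k \, x^{n-2-k}$. The empty face contributes the $x^{n-1}$ term. For $k \ge 1$, a $k$-face consists of $k+1 \ge 2$ edges of $K_n$ with a unique common vertex, hence lies in a unique star, giving $f_k = n \binom{n-1}{k+1}$. For $k = 0$, summing vertex counts naively over the $n$ stars gives $n(n-1)$ and double-counts each of the $\binom{n}{2}$ edges of $K_n$, yielding $f_0 = \binom{n}{2}$. Substituting $j = k+1$ packages the $k \ge 1$ contributions together with the naive $k = 0$ contribution as $n \sum_{j=1}^{n-1} \binom{n-1}{j} x^{n-1-j}$, and pulling off the $-\binom{n}{2} x^{n-2}$ correction reproduces the stated formula. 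No serious obstacle is expected; the one substantive step is the star-or-triangle dichotomy, where both the quadratic and cubic generators of $J(K_n)$ are simultaneously needed to force the clean facet structure.
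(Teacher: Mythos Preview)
Your proposal is correct and follows essentially the same approach as the paper: identify the facets of $\Delta_{J(K_n)}$ with the $n$ edge-stars of $K_n$, note that any two stars share exactly one vertex, and obtain the face vector by summing over stars and correcting the double count at the vertex level. The paper's own proof is terser---it asserts that the maximal triangle-free graphs with no disjoint edge pair are ``clearly'' the stars---whereas you supply the star-or-triangle dichotomy that justifies this step; otherwise the arguments are the same.
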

\begin{proof}
Vertices in $\Delta_{J(K_n)}$ correspond to edges in $K_n$. The maximal faces of  $\Delta_{J(K_n)}$ correspond to simple graphs with vertex set $[n]$ that have no pairs of the types (1) or (2) above. The maximal such graphs are clearly the $n$ star graphs with $n-1$ edges,
\[
  \Delta_i = \{[i,j_1],\ldots, [i,j_{n-1}]\} \mbox{ for a fixed }i.
\]  
Every pair of such graphs share a common edge, and every edge lies in exactly two such graphs. This explains the term ${n\choose 2} x^{n-2}$. 
\end{proof}
\begin{remark}
	As pointed out by the referee, this complex is the nerve of the cover of $K_n$ by closed 	edges, homotopy equivalent to $K_n$.
\end{remark}
 
\begin{exm}
  For $K_7$, the f-vector is 
  \[
\begin{array}{ccc}
    f(\Delta_{J(K_n)}) &= &(1,0,0,0,0,0,0)+7(0,6,15,20,15,6,1) -
    (0,21,0,0,0,0,0) \\
    f(\Delta_{J(K_n)}, x)&=& \! \! \! \! \! \! \! \! \! \! \! \! \! \! \! \! \! \! \! \! \! \! \! \! \! \! \! \! \! x^6+21x^5+105x^4+140x^3+105x^2+42x+7.
\end{array}
  \]
\end{exm}
\noindent By \cite{Z}, \S 8.3, 
\[
  h(\Delta, x)=f(\Delta, x-1),
\]
so for $K_n$ we have $h(\Delta_{J(K_n)},x) =$ 
\[
(x-1)^{n-1} +n\Big((n-1)(x-1)^{n-2}+{n-1 \choose 2}(x-1)^{n-3}+\ldots +1\Big) - {n \choose 2}(x-1)^{n-2}
\]
which simplifies to 
$$(x-1)^{n-1}+n(x^{n-1}-(x-1)^{n-1})-{n\choose 2}(x-1)^{n-2}$$

\noindent In \cite{K}, \S 8.3, $h_i(\Delta)$ is the coefficient of $x^{\dim \Delta +1 -i}$ in $h(\Delta,x)$, so since 
$$ P(S/I_\Delta,t)= (h_0+h_1t+h_2t^2+\cdots)/(1-t)^{\dim \Delta +1},$$ we need to reverse the order of the $h-$vector, and the result for the Hilbert series in Theorem B follows. 


\subsection{Resolution of the Alexander dual}
We now turn to the Alexander dual ideal $J(K_n)^\vee$. As noted in \S 2,  $J(K_n)^\vee$ is the
monomialization of the primary decomposition of $J(K_n)$. Letting $CF(\Delta)$ denote the set of
minimal cofaces of $\Delta$, Theorem \ref{primaryDecompSR} yields
\[
  J(K_n) = \bigcap\limits_{[x_{i_1},\ldots, x_{i_k}] \in CF(\Delta)}\langle x_{i1},\ldots, x_{ik}\rangle.
\]
By Lemma~\ref{DeltaDes}, $\Delta$ consists of $n$ copies of the $n-2$
simplex $\Delta_{n-2}$, glued at a total of ${n \choose 2}$ vertices,
so a maximal face of $\Delta$ is one of the $\Delta_{n-2}$'s, whose
complement coface consists of the remaining
\[
  {n \choose 2} -(n-1) = {n-1 \choose 2}
\]
vertices. Therefore $J(K_n)^\vee$ is generated by the corresponding $n$
monomials of degree ${n-1 \choose 2}$.
\begin{thm}\label{HochsterUSE}
The projective dimension of $S/J(K_n)^\vee$ is three, and the $\Z^{ n
  \choose 2}$graded betti numbers are
\[
  \begin{array}{ccccc}
    b_{0,{\bf m}} (S/J(K_n)^\vee) &  = & 1 & \mbox{ if } |{\bf m}| = &0 \\
    b_{1,{\bf m}} (S/J(K_n)^\vee) &  = & 1 & \mbox{ if }\ {\bf m}
                                        \leftrightarrow &K_{n-1}
                                                          \subseteq K_n \\
    b_{2,{\bf m}}(S/J(K_n)^\vee) &  = & 1 & \mbox{ if } |{\bf m}| = &{ n  \choose 2} -1\\
    b_{3,{\bf m}} (S/J(K_n)^\vee) &  = & { n-1  \choose 2} & \mbox{ if } |{\bf m}|=&{ n  \choose 2}
   \end{array}                                                 
  \]
  \end{thm}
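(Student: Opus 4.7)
The plan is to describe $J(K_n)^\vee$ explicitly, then use Hochster's formula in its LCM-lattice form to compute all multigraded Betti numbers and read off the projective dimension.

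First, Lemma~\ref{DeltaDes} identifies the maximal faces of $\Delta_{J(K_n)}$ as the $n$ stars $\Delta_i = \{[i,j] : j \neq i\}$, so Theorems~\ref{primaryDecompSR} and~\ref{AlexPrimary} give
\[
J(K_n)^\vee = \langle m_i : 1 \le i \le n\rangle, \qquad m_i = \prod_{\substack{1 \le j < k \le n \\ j, k \neq i}} x_{jk}, \quad \deg m_i = \binom{n-1}{2}.
\]
This immediately records $b_{0,\mathbf{0}} = 1$ together with $b_{1,\mathbf{m}} = 1$ at each of the $n$ multidegrees corresponding to the subgraphs $K_{n-1} \subset K_n$.

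Next I would fix the LCM lattice using two elementary observations: (a) for distinct $i,j$, every edge $\{k,\ell\} \neq \{i,j\}$ avoids at least one of $i,j$, so $\operatorname{lcm}(m_i, m_j) = \prod_{\{k,\ell\} \neq \{i,j\}} x_{k\ell}$ has degree $\binom{n}{2} - 1$; and (b) for three or more distinct indices, every edge of $K_n$ touches at most two of them, so the LCM of three or more generators equals the full product $\prod x_{k\ell}$ of degree $\binom{n}{2}$. The LCM lattice therefore has exactly four ranks: $\hat 0$, the $n$ atoms $m_i$, the $\binom{n}{2}$ pairwise LCMs $m_{ij}$, and $\hat 1$; its open interval $(\hat 0, \hat 1)$ is the incidence poset of $K_n$, with atoms the vertices and coatoms the edges. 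The reformulation of Hochster's formula in terms of the LCM lattice,
\[
b_{i,\mathbf{m}}(S/J(K_n)^\vee) = \dim_\k \tilde H_{i-2}\bigl(\Delta((\hat 0, \mathbf{m}));\k\bigr),
\]
then gives, at $\mathbf{m} = m_{ij}$, a two-point antichain as open interval, whence $\tilde H_0 = \k$ and $b_{2, m_{ij}} = 1$ for each of the $\binom{n}{2}$ pairs; and at $\mathbf{m} = \hat 1$, the order complex is the barycentric subdivision of $K_n$, hence homotopy equivalent to the 1-complex $K_n$, so $\tilde H_1 = \k^{\binom{n}{2} - (n-1)} = \k^{\binom{n-1}{2}}$ and $b_{3, \hat 1} = \binom{n-1}{2}$.

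Since no element of the LCM lattice lies above $\hat 1$ and no open interval has order-complex dimension exceeding one, there are no Betti entries in homological degree four or higher, giving $\pdim(S/J(K_n)^\vee) = 3$. The main technical point is the top-multidegree computation: recognizing $\Delta((\hat 0, \hat 1))$ as (the barycentric subdivision of) $K_n$ and reading off $\dim \tilde H_1(K_n) = \binom{n-1}{2}$. Equivalently, one can apply Hochster's formula (Theorem~\ref{HochsterT}) directly to $\Gamma = \Delta_{J(K_n)}^\vee$ together with combinatorial Alexander duality, translating the top Betti into $\tilde H_1(\Delta_{J(K_n)})$; the nerve remark following Lemma~\ref{DeltaDes} gives $\Delta_{J(K_n)} \simeq K_n$, and the same count $\binom{n-1}{2}$ drops out.
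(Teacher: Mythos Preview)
Your proof is correct and uses the same framework as the paper's: describe the generators of $J(K_n)^\vee$ via Lemma~\ref{DeltaDes}, compute the LCM lattice, and read off the multigraded Betti numbers from the Gasharov--Peeva--Welker formulation of Hochster's formula. The identification of the pairwise and triple LCMs, and hence of the four ranks of the LCM lattice, matches the paper exactly.

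The one substantive difference is the computation of $b_{3,\hat 1}$. The paper argues by an alternating sum coming from the Taylor complex,
\[
\binom{n}{3}-\binom{n}{4}+\binom{n}{5}-\cdots = 1-n+\binom{n}{2}=\binom{n-1}{2},
\]
phrased as ``accounting for dependencies, dependencies on dependencies, and so on.'' You instead identify the order complex of the open interval $(\hat 0,\hat 1)$ in the LCM lattice as the barycentric subdivision of $K_n$ and compute $\dim\tilde H_1(K_n)=\binom{n}{2}-n+1=\binom{n-1}{2}$ directly. Your route is a bit cleaner: it makes the vanishing of $b_{i,\hat 1}$ for $i\neq 3$ immediate (the order complex is one--dimensional and connected), whereas the paper's Euler--characteristic count tacitly relies on that vanishing. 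Your observation also dovetails with the referee's remark after Lemma~\ref{DeltaDes} that $\Delta_{J(K_n)}\simeq K_n$, so the alternative Alexander--duality computation you sketch at the end lands on the same $\tilde H_1(K_n)$.
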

  \begin{proof}
    The proof follows from Hochster's formula. For each $K_{n-1}$ in
    $K_n$, there is a generator with weight vector having
    entry $1$ in the positions corresponding to edges of the
    $K_{n-1}$, hence $n$ generators of weight ${n-1 \choose 2}$.

   We now make use of the LCM lattice resolution, as described in
   \cite{GPW}. The first syzygies of a monomial ideal correspond to the
    LCM of two generators. The two corresponding $K_{n-1}$
    subgraphs $\Delta_i$ and $\Delta_j$  intersect everywhere except
    along the missing edge $[ij]$ so the LCM corresponds to a weight vector which is
    one in all but the single entry $[ij]$. This yields ${n \choose 2}$ first
    syzygies, with weight as above.

    For the second syzygies, by Hochster's formula they must
    have weight $(1,1,\ldots, 1)$, and correspond to the LCM of triples of
    monomials, of which there are ${n \choose 3}$. However, these
    choices are not independent, since every ${n \choose i}$ set with
    $i \ge 4$ has LCM of weight $(1,1,\ldots, 1)$. So accounting for
    dependencies, dependencies on dependencies, and so on, we find
    that the number of minimal second syzygies is
    \[
{n \choose 3}-{n \choose 4}+{n \choose 5}-{n \choose 6}+ \cdots = 1 -n
+{n \choose 2} = {n-1 \choose 2},
\]
which concludes the proof.
\end{proof}
\noindent Combining Lemma~\ref{DeltaDes}, Theorem~\ref{HochsterUSE}, and Corollary 5.59 of \cite{MillS} proves Theorem B.
  \begin{example}\label{K7betti}
    The $\Z$-graded betti table for $S/J_7^\vee$ is
   
   \begin{center}
   	\begin{small}
   		\begin{tabular}{c|cccc}
   			\diagbox{j}{i}&0&1&2&3\\
   			\hline 
   			0&1&--&--&--\\
   			1&--&--&--&--\\
   			\vdots&\vdots&\vdots&\vdots&\vdots\\
   			13&--&--&--&--\\
   			14&--&7&--& --    \\
   			15&-- &--&--& --    \\
   			16&-- &--&--& --    \\
   			17&--&--&--& --    \\
   			18&-- &--&21& 15 
   		\end{tabular}
   	\end{small}
   \end{center}
The Hilbert series of $P(R/J_7^\vee,t)$ is therefore
\[
 \frac{1-7t^{15}+21t^{20}-15t^{21}}{(1-t)^6} =
  \frac{\big(\sum\limits_{i=0}^{14}(i+1)t^i\big) +9t^{15}+3t^{16}-3t^{17}-9t^{18}-15t^{19}}{(1-t)^4}.
\]
    \end{example}
\subsection{The linear strand of $J(K_n)$ and the cut polynomial}
A squarefree quadratic monomial ideal encodes the edges of a graph $\Gamma$,
with a generator $x_ix_j$ corresponding to the edge $[ij]$; such
ideals are often called {\em edge ideals}, and there is a wide
literature on the topic; see \cite{RV}. It follows from Hochster's theorem that the betti numbers $b_{i,i+1}(S/I_\Gamma)$ are determined by the 
{\em cut polynomial} of the graph $\Gamma$; this is used by Papadima-Suciu in \cite{PS} to establish a formula for the Chen ranks of right angled Artin groups; their result is over $E$, but it translates to $S$.
\begin{defin}
For a simple (no loops or multiple edges) graph $\Gamma$, the cut polynomial is defined via
\vskip -.1in
\[
Q_\Gamma(t) = \sum\limits_{j \ge 2} c_j(\Gamma)t^j, \mbox{ where } c_j(\Gamma) = \sum\limits_{\stackrel{W \subseteq V}{|W| = j}}(|\mbox{connected components of }\Gamma_W|-1).
\]
\end{defin}
\noindent In \cite{Hochster}, Hochster proves the betti numbers $b_{j,j+1}$ of $S/I_\Gamma$ satisfy
\[
b_{j,j+1}(S/I_\Gamma) = c_{j+1}(\Gamma).
\]
Since the regularity of $S/J(K_n)$ is two and we know the Hilbert series, to determine the betti table, it suffices to determine the top row, hence to finding the coefficients of the cut polynomial of the graph $\Gamma_n$ corresponding to the quadratic generators of $J(K_n)$. Let $I_{\Gamma_n}$ denote the corresponding edge ideal. For small $n$ the $c_k$ are: 
\begin{table}[ht]
\begin{tabular}{|c|c|c|c|c|c|c|c|c|c|c|c|}
\hline $n$& $c_2$ & $c_3$ & $c_4$ & $c_5$ & $c_6$ & $c_7$ & $c_8$ & $c_9$ & $c_{10}$ & $c_{11}$& $c_{12}$\\
\hline $4$& $3$ & $0$ & $0$ & $0$ & $0$ & $0$ & $0$ & $0$ & $0$ & $0$& $0$\\
\hline $5$& $15$ & $30$ & $10$ & $0$ & $0$ & $0$ & $0$ & $0$ & $0$ & $0$& $0$\\
\hline $6$& $45$ & $210$ & $390$ & $285$ & $100$ & $15$ & $0$ & $0$ & $0$ & $0$& $0$\\
\hline $7$& $105$ & $840$ & $3150$ & $6510$ & $7497$ & $5565$ & $2835$ & $980$ & $210$ & $21$& $0$\\
\hline
\end{tabular}
\end{table}
\vskip -.01in
\noindent As any vertex $v \in \Delta_{J(K_n)}$ lies on a pair of $n-2$ simplices, to disconnect $v$ requires removing the $2(n-2)$ vertices adjacent to $v$, leaving a total of ${n \choose 2}-(2n-3)$ vertices, hence $c_j(J(K_n))$ vanishes when $j \ge {n-2 \choose 2}+2$. 
\begin{problem} Determine the cut polynomial for $\Gamma_n$. It is not hard to show that the first two $c_i$ are
\[
\begin{array}{ccc}
c_2=b_{12}(S/I_{\Gamma_n}) & = &\!\!\!\!\!\!\!\!\!\! \!\!\!\!\!\!\!\!\!\!\!\!\!\!\!\!\!\!\!\! \!\!\!\!\!\!\!\!\!\!\!\!\!\!\!\!\!\!\!\!\!3 {n \choose 4}\\
c_3=b_{23}(S/I_{\Gamma_n}) & = & 3 {n \choose 2,3,n-5} + \frac{1}{3} {n \choose 2,2,2,n-6}
\end{array}
\]
\end{problem}
\section{Appendix: The Orlik-Solomon algebra and Resonance Varieties}
\noindent For a hyperplane arrangement
\vskip .05in
\begin{center}
  $\A = \bigcup\limits_{i=1}^m H_i \subseteq V\simeq \kk^{\ell+1},  $
\end{center}
\vskip .05in
we write $M$ for the complement $V \setminus \A$; unless otherwise
noted $V$ is a $\kk= \C$ vector space. We focus on the case where $\A \subset V$  is {\em central} and
{\em essential}, where central means the linear forms defining the
$H_i$ are homogeneous, and essential means the common intersection of
the $H_i$ is $0 \in V$.  Note that $\A$ defines both an {\em affine} arrangement in $V$, as well as a projective arrangement in $\P(V)$.

Orlik and Solomon prove in \cite{OS} that the cohomology ring
$H^*(M,\Z)$ has a purely combinatorial description: it is determined by the intersection lattice 
$L(\A)$. This lattice (in the graded poset sense) consists 
of the intersections of elements of $\A$, ordered by reverse
inclusion. The ambient vector space $V = \hat{0}$, rank one 
elements of $L_{\mathcal A}$ are hyperplanes, and the origin is
$\hat{1}$.

For the complement $M$ of a hyperplane arrangement $\A$
with fundamental group $\pi_1(M)=G$, the first resonance variety $R^1(G)$ is the jump locus for the cohomology of $G$. 
We work over a field $\k$ of characteristic zero; by Orlik-Solomon's result $H^*(M,\k) \simeq E/I_\A$ (see Definition~\ref{formalDefOS}). By convention we write $A$ for $H^*(M,\k)$ and $R^1(A)$ for $R^1(G)$. $A$ is $\Z$-graded; as it is a cohomology ring we denote the $i^{th}$ graded component by $A^i$ . By Falk \cite{Fa}, the  points $(a_1,\ldots,a_m)$ of $R^1(A)\subseteq \k^m$ correspond to one-forms  $a=\sum_1^m a_ie_i$ where the map $\lambda \mapsto \lambda \wedge a$ from $A^1$ to $A^2$ has rank $\le m-2$. 

\subsection{Combinatorics of Arrangements}\label{combinatorics} Two important combinatorial players are the M\"{o}bius function and  Poincar\'e polynomial:
\begin{defin}
The M\"{o}bius function $\mu$ : $L(\A) \longrightarrow \Z$ is given by
$$\begin{array}{ccc}
\mu(\hat{0}) & = & 1\\
\mu(b) & = & -\!\!\sum\limits_{a < b}\mu(a) \mbox{, if } \hat{0}< b.
\end{array}$$

\noindent The Poincar\'e polynomial $\pi(\A,t) = \!\!\sum\limits_{x \in L({\mathcal A})}\mu(x) \cdot (-t)^{\text{rank}(x)}$, and is equal to $ \sum_{i=1}^{\ell+1}(\dim_{\k} A^i) t^i$.
\end{defin}
\noindent In \cite{Fa} Falk introduced the concept of a {\em neighborly partition}:
\begin{defin}\label{NP}\cite{Fa}
A partition $\Pi$ of a subset $U$ of $\A$ is neighborly if
for every codimension two intersection $H_{i_1} \cap \cdots \cap
H_{i_k}$ with $\{i_1,\ldots i_k\} \subseteq U$ if all but one of the $i_j$ are contained in a block of
$\Pi$, then $\{i_1,\ldots, i_k\}$ is contained in the block. 
\end{defin}
\begin{example}
For  the arrangement $A_3$ in Example \ref{braidarr}, the partition $\{14|25|36\}$ is a neighborly partition of the set $\{1,2,3,4,5,6\}$ of all lines of the arrangement.
\end{example}
\subsection{Algebra of Arrangements}\label{algebra}
The central algebraic object in the study of hyperplane arrangements
is the cohomology ring of the arrangement complement, which was
described by Orlik-Solomon in their landmark paper \cite{OS}.
\begin{defin}\label{formalDefOS}
  The Orlik-Solomon algebra $A$ with coefficients in $\k$ of an arrangement
  \[
   \A=\bigcup\limits_{i=1}^m H_i \subseteq \P^\ell
  \]
 is the quotient of the exterior algebra $E=\bigwedge (\k^m)$ on generators $e_1, \dots , e_m$
in degree $1$ by the ideal $I_\A$ generated by all elements of
the form
\[
  \partial(e_{i_1} \cdots e_{i_r})=\sum_{q}(-1)^{q-1}e_{i_1} \cdots
\widehat{e_{i_q}}\cdots e_{i_r}, \mbox{ such that }\codim H_{i_1}\cap
\cdots \cap H_{i_r} < r.
\]
The ideal $I_\A$ is generated in degree $\ge 2$, so $A^0=\k$ and
$A^1=\k^m$.
\end{defin}
\begin{exm} For Example~\ref{braidarr}, $ I_\A = \langle \partial(e_1e_2e_3), \partial(e_1e_5e_6),    \partial(e_2e_4e_6),\partial(e_3e_4e_5)\rangle$. 
 \end{exm}
\noindent It is clear from the definition that the Orlik-Solomon algebra $A$ depends purely on the combinatorics of $L_{\A}$, as do the neighborly partitions appearing in Definition~\ref{NP}. Nets are connected to the resonance variety $R^1(A)$ via the results of \cite{FY}.
As noted earlier, for each $a\in A^1$, we have $a \wedge a=0$, so
exterior right-multiplication by $a$ defines a cochain complex of
$\k$-vector spaces
\begin{equation} \label{eq:aomoto}
(A,a)\colon \quad
\xymatrix{
0 \ar[r] &A^0 \ar[r]^{\wedge a} & A^1
\ar[r]^{\wedge a}  & A^2 \ar[r]^{\wedge a}& \cdots \ar[r]^{\wedge a}
& A^{k}\ar[r]^{\wedge a} & \cdots}.
\end{equation}
The complex $(A,a)$ was introduced by Aomoto~\cite{Ao}, and used by Esnault-Schechtman-Viehweg \cite{ESV} and Schechtman-Terao-Varchenko \cite{STV}
to study local system cohomology.  For $a=\sum_{i=1}^n a_ie_i \in A^1$, we can define the loci where there complex is not exact:
\[
R^j(A) = \{a \in A^1 \mid \dim H^j(A,a) \ne 0\},
\]
\vskip .01in
\noindent which are homogeneous algebraic subvarieties of $\P^{m-1}$, introduced by Falk in \cite{Fa}. Yuzvinsky shows in \cite{Yuz} that the complex is exact as long as $\sum_{i=1}^n a_i \ne 0$.
Falk shows that each component of $R^1(A)$ is associated to a neighborly partition of a subset $U$ of the hyperplanes of $\A$. 
 In \cite{FY} Falk-Yuzvinsky give an interpretation of this in terms of the geometry of {\em multinets} in $\P^2$, which are collections of lines with multiplicity.
\begin{exm}\label{resExample} For Example~\ref{braidarr}, since $e_ie_j-e_ie_k+e_je_k = (e_i-e_j)(e_j-e_k)$, the generators of $I_{\mathcal A}$ can be written 
  \[
  \begin{array}{c}
                                           (e_1-e_2)\wedge(e_2-e_3),\\
                                           (e_1-e_5)\wedge(e_5-e_6), \\
                 (e_2-e_4)\wedge(e_4-e_6), \\
              (e_1-e_3)\wedge(e_3-e_5).
                   \end{array}                
                 \]
                 Hence for exterior multiplication with the one form $a$
                 \[
                   A^1 \stackrel{\wedge  a}{\longrightarrow} A^2,\]
                   the $\P^1$ spanned by $\{e_1-e_2,
                   e_2-e_3\} \subseteq \P^5\simeq \P(A^1)$ is a component of
                   $R^1(A)$, as are the three $\P^1$'s associated to
                   the other three quadrics in $I_{\A}$. A computation shows that 
                   \[
                   (e_1-e_2+e_4-e_5)\wedge(e_2-e_3+e_5-e_6) = \partial(e_1e_2e_3)+\partial(e_1e_5e_6)-\partial(e_2e_4e_6)+\partial(e_3e_4e_5).
                   \]
                   This means there is a fifth $\P^1$ in $R^1(\A)$. As shown by Falk in \cite{Fa},  $R^1(\A)$ is the union of these five lines. The fifth component comes from a neighborly partition $\Pi=|14|25|36|$. As noted in Example~\ref{braidarr} the partition $\Pi$ is a $(3,2)$ net.
\end{exm}
\noindent  For a maximal subset $U =\{H_{i_1},\ldots, H_{i_k}\}$ having codimension two intersection, the
partition into singleton blocks $|i_1|i_2|\cdots|i_k|$ is
neighborly, and yields components of $R^1(\A)$ which are called {\em local}. These components correspond to generators of $I_\A$, as in the first four $\P^1$'s in Example~\ref{resExample}. In \cite{CScv} and \cite{LY} it is shown that $R^1(\A)$ is a union of projectively disjoint projective subspaces. 

 \subsection{Acknowledgements} Our collaboration began at the CIRM conference on ``Lefschetz Properties in Algebra, Geometry
  and Combinatorics''; we thank CIRM and the organizers. Computations in {\tt Macaulay2} \cite{GS} were essential to our
  work. We thank two referees for detailed and useful suggestions, especially a nice simplification in the proof of Theorem A.
  
\bibliographystyle{amsalpha}

\end{document}